\def\Z{{\mathbb{Z}}}
\def\S{{\mathbb{S}}}
\def\St{\operatorname{St}}
\def\Lk{\operatorname{Lk}}
\newtheorem{Theorem}{Theorem}[section]
\newtheorem{Lemma}[Theorem]{Lemma}
\theoremstyle{definition}
\newtheorem{Definition}[Theorem]{Definition}
\newtheorem*{Problem}{Problem}
\theoremstyle{remark}
\newtheorem*{Remark}{Remark}
\begin{document}
\sloppy
\title{Reconstructible graphs, simplicial flag complexes of homology manifolds 
and associated right-angled Coxeter groups}
\author{Tetsuya Hosaka} 
\address{Department of Mathematics, 
Shizuoka University, Suruga-ku, Shizuoka 422-8529, Japan}
\date{January 31, 2015}
\email{hosaka.tetsuya@shizuoka.ac.jp}
\keywords{reconstructible graph; homology manifold; flag complex}
\subjclass[2000]{57M15; 05C10; 20F55}
\thanks{
Partly supported by the Grant-in-Aid for Young Scientists (B), 
The Ministry of Education, Culture, Sports, Science and Technology, Japan.
(No.\ 25800039).}
\begin{abstract}
In this paper, 
we investigate a relation between 
finite graphs, simplicial flag complexes and right-angled Coxeter groups, 
and we provide a class of reconstructible finite graphs.
We show that 
if $\Gamma$ is a finite graph which is the 1-skeleton 
of some simplicial flag complex $L$ which is 
a homology manifold of dimension $n \ge 1$, 
then the graph $\Gamma$ is reconstructible.
\end{abstract}
\maketitle
%
\section{Introduction}

In this paper, 
we investigate a relation between 
finite graphs, simplicial flag complexes and right-angled Coxeter groups, 
and we provide a class of reconstructible finite graphs.
This paper treats only ``simplicial'' graphs.
We show that 
if $\Gamma$ is a finite graph which is the 1-skeleton 
of some simplicial flag complex $L$ which is 
a homology manifold of dimension $n \ge 1$, 
then the graph $\Gamma$ is reconstructible.

A graph $\Gamma$ is said to be {\it reconstructible}, 
if any graph $\Gamma'$ with the following property~$(*)$ 
is isomorphic to $\Gamma$.
\begin{enumerate}
\item[$(*)$] Let $S$ and $S'$ be the vertex sets of $\Gamma$ and $\Gamma'$ respectively.
Then there exists a bijection $f:S\rightarrow S'$ such that 
the subgraphs $\Gamma_{S-\{s\}}$ and $\Gamma'_{S'-\{f(s)\}}$ are isomorphic for any $s\in S$, 
where $\Gamma_{S-\{s\}}$ and $\Gamma'_{S'-\{f(s)\}}$ 
are the full subgraphs of $\Gamma$ and $\Gamma'$ whose vertex sets are 
$S-\{s\}$ and $S'-\{f(s)\}$ respectively.
\end{enumerate}

The following open problem is well-known as the Reconstruction Conjecture.

\begin{Problem}[{Reconstruction Conjecture}]
Every finite graph with at least three vertices will be reconstructible?
\end{Problem}

Some classes of reconstructible graphs are known 
(cf.\ \cite{BoHe}, \cite{Ke}, \cite{Ma}, \cite{Mc}, \cite{Mc2}, \cite{Ni}) 
as follows:
Let $\Gamma$ be a finite graph with at least three vertices.
\begin{enumerate}
\item[${\rm (i)}$] If $\Gamma$ is a regular graph, then it is reconstructible.
\item[${\rm (ii)}$] If $\Gamma$ is a tree, then it is reconstructible.
\item[${\rm (iii)}$] If $\Gamma$ is not connected, then it is reconstructible.
\item[${\rm (iv)}$] If $\Gamma$ has at most 11 vertices, then it is reconstructible.
\end{enumerate}

Our motivation to consider 
graphs of the 1-skeletons of some simplicial flag complexes 
comes from the following idea 
on right-angled Coxeter groups and their nerves.

Details of Coxeter groups and Coxeter systems are 
found in \cite{Bo}, \cite{Br} and \cite{Hu}, and 
details of flag complexes, nerves, Davis complexes and their boundaries are 
found in \cite{D1}, \cite{D2} and \cite{Mo}.

Let $\Gamma$ be a finite graph and let $S$ be the vertex set of $\Gamma$.
Then the graph $\Gamma$ uniquely determines a finite simplicial flag complex $L$ 
whose 1-skeleton $L^{(1)}$ coincide with $\Gamma$.
Here a simplicial complex $L$ is a {\it flag complex}, if the following condition holds:
\begin{enumerate}
\item[$(**)$] For any vertex set $\{s_0,\dots,s_n\}$ of $L$, 
if $\{s_i,s_j\}$ spans 1-simplex in $L$ for any $i,j\in\{0,\dots,n\}$ with $i\neq j$ 
then the vertex set $\{s_0,\dots,s_n\}$ spans $n$-simplex in $L$.
\end{enumerate}

Also every finite simplicial flag complex $L$ uniquely determines 
a right-angled Coxeter system $(W,S)$ whose nerve $L(W,S)$ coincide with $L$ 
(cf.\ \cite{Bes0}, \cite{D1}, \cite{D2}, \cite{Dav0}, \cite{Dr}).
Here for any subset $T$ of $S$, 
$T$ spans a simplex of $L$ if and only if 
the parabolic subgroup $W_T$ generated by $T$ is finite 
(such a subset $T$ is called a {\it spherical subset of $S$}).

Moreover it is known that every right-angled Coxeter group $W$ 
uniquely determines its right-angled Coxeter system $(W,S)$ up to isomorphisms 
(\cite{Ra}, \cite{Ho00}).

By this corresponding, we can identify 
a finite graph $\Gamma$, 
a finite simplicial flag complex $L$, 
a right-angled Coxeter system $(W,S)$ and 
a right-angled Coxeter group $W$.

Let $\Gamma$ and $\Gamma'$ be finite graphs, 
let $L$ and $L'$ be the corresponding flag complexes, 
let $(W,S)$ and $(W',S')$ be the corresponding right-angled Coxeter systems, and 
let $W$ and $W'$ be the corresponding right-angled Coxeter groups, respectively.
Then the following statements are equivalent:
\begin{enumerate}
\item[(1)] $\Gamma$ and $\Gamma'$ are isomorphic as graphs;
\item[(2)] $L$ and $L'$ are isomorphic as simplicial complexes;
\item[(3)] $(W,S)$ and $(W',S')$ are isomorphic as Coxeter systems;
\item[(4)] $W$ and $W'$ are isomorphic as groups.
\end{enumerate}

Also, for any subset $T$ of the vertex set $S$ of the graph $\Gamma$, 
the full subgraph $\Gamma_T$ of $\Gamma$ with vertex set $T$ corresponds 
the full subcomplex $L_T$ of $L$ with vertex set $T$, 
the parabolic Coxeter system $(W_T,T)$ generated by $T$, and 
the parabolic subgroup $W_T$ of $W$ generated by $T$.

Hence we can consider the reconstruction problem 
as the problem on simplicial flag complexes and 
also as the problem on right-angled Coxeter groups.

Moreover, the right-angled Coxeter system $(W,S)$ associated by the graph $\Gamma$ 
defines the Davis complex $\Sigma$ which is a CAT(0) space 
and we can consider the ideal boundary $\partial \Sigma$ of the CAT(0) space $\Sigma$ 
(cf.\ \cite{Bes0}, \cite{Bes}, \cite{BH}, \cite{D1}, \cite{D2}, \cite{Dav0}, \cite{Dr}, 
\cite{Gr}, \cite{Gr0}, \cite{Mo}).
Then the topology of the boundary $\partial \Sigma$ is 
determined by the graph $\Gamma$, and 
the topology of $\partial \Sigma$ is also a graph invariant.

Based on the observations above, 
we can obtain the following lemma from results of 
F.~T.~Farrell \cite[Theorem~3]{F}, M.~W.~Davis \cite[Theorem~5.5]{Dav0} and \cite[Corollary~4.2]{Ho0} 
(we introduce details of this argument in Section~3).

\begin{Lemma}\label{Lemma:key}
Let $(W,S)$ be an irreducible Coxeter system where $W$ is infinite 
and let $L=L(W,S)$ be the nerve of $(W,S)$.
Then the following statements are equivalent:
\begin{enumerate}
\item[(1)] $W$ is a virtual Poincar\'{e} duality group.
\item[(2)] $L$ is a generalized homology sphere.
\item[(3)] $\tilde{H}^i(L_{S-T})=0$ for any $i$ and any non-empty spherical subset $T$ of $S$.
\end{enumerate}
\end{Lemma}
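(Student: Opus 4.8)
The plan is to route the entire equivalence through the cohomology of $W$ with group-ring coefficients and then to decode that cohomology combinatorially by means of the Davis complex. First I would reduce statement~(1) to an algebraic condition. As a Coxeter group $W$ is linear, Selberg's lemma provides a torsion-free subgroup $\Gamma\le W$ of finite index, and $W$ is a virtual Poincar\'e duality group exactly when $\Gamma$ is a $\mathrm{PD}^n$ group. Since $\Z W\cong\mathrm{Coind}^W_\Gamma\Z\Gamma$ for finite index, Shapiro's lemma gives $H^*(W;\Z W)\cong H^*(\Gamma;\Z\Gamma)$, and the latter is also $H^*_c(\Sigma)$ for the contractible Davis complex $\Sigma$ on which $\Gamma$ acts freely and cocompactly. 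The cohomological criterion for duality groups --- this is the role of Farrell's \cite[Theorem~3]{F} --- then says that $\Gamma$ is a $\mathrm{PD}^n$ group precisely when $H^i(W;\Z W)=0$ for $i\neq n$ and $H^n(W;\Z W)\cong\Z$. So~(1) is equivalent to $H^*(W;\Z W)$ being concentrated in a single degree~$n$, where it is infinite cyclic.

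Next I would feed in Davis's computation of this cohomology \cite[Theorem~5.5]{Dav0}, in the form recorded as \cite[Corollary~4.2]{Ho0},
\[
H^k(W;\Z W)\;\cong\;\bigoplus_{w\in W}\tilde H^{k-1}\!\bigl(L_{S-\operatorname{In}(w)}\bigr),
\]
where $\operatorname{In}(w)=\{s\in S:\ell(ws)<\ell(w)\}$ is a spherical subset and every spherical subset (including $\operatorname{In}(e)=\emptyset$) occurs. Condition~(3) is precisely the vanishing of every summand with $\operatorname{In}(w)\neq\emptyset$. The decisive point is a dichotomy coming from the infinitude of $W$: for a fixed non-empty spherical $T$ the subgroup $W_T$ is finite while $W$ is infinite, so there are infinitely many $w$ with $\operatorname{In}(w)=T$; hence if $\tilde H^{k-1}(L_{S-T})\neq0$ for even one such $T$, then $H^k(W;\Z W)$ is infinitely generated and~(1) fails. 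This proves $(1)\Rightarrow(3)$. Conversely, when~(3) holds only the term $w=e$ survives, so $H^k(W;\Z W)\cong\tilde H^{k-1}(L)$, and~(1) becomes the statement that $L$ has the reduced cohomology of $S^{n-1}$.

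It remains to match~(3) with~(2), and to supply the ``$L$ is a cohomology sphere'' input that closes $(3)\Rightarrow(1)$; both come from a topological characterization of generalized homology spheres by acyclicity of deletions, which is what I would extract from \cite[Corollary~4.2]{Ho0} together with Davis's manifold criterion. For a single vertex $s$, the Mayer--Vietoris sequence of $L=L_{S-\{s\}}\cup\overline{\operatorname{St}}(s)$ --- whose pieces meet in $\Lk(s)$ and whose star is a contractible cone --- yields $\tilde H^i(\Lk(s))\cong\tilde H^{i+1}(L)$ once $L_{S-\{s\}}$ is acyclic. Running this over growing spherical sets $T$, so that $L_{S-T}$ is reached by successive star-deletions and the links $\Lk(\sigma_T)$ appear, I would induct on $\dim L$ to show that acyclicity of all the $L_{S-T}$ (for non-empty spherical $T$) is equivalent to $L$ and every such link being a homology sphere of the correct dimension, i.e.\ to $L$ being a generalized homology $(n-1)$-sphere; the base case $\dim L=0$ is the computation forcing $L$ to be two points, as for the infinite dihedral group. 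In particular $(3)\Rightarrow(2)\Rightarrow$ ``$L$ a cohomology $(n-1)$-sphere'', which with the previous paragraph gives $(3)\Rightarrow(1)$ and completes $(1)\Leftrightarrow(2)\Leftrightarrow(3)$.

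The step I expect to be the main obstacle is controlling the top-degree term. It is relatively soft to see from Davis's formula that~(1) forces the deletions to be acyclic and $L$ to be a homology manifold; the delicate part is upgrading ``duality group'' to ``$\mathrm{PD}^n$ with $H^n(W;\Z W)\cong\Z$'', i.e.\ excluding a nontrivial orientation module and any torsion in the top cohomology of $L$, and this is exactly where the precise form of Farrell's \cite[Theorem~3]{F} and the irreducibility and infiniteness hypotheses on $(W,S)$ are needed rather than a purely formal argument. The secondary difficulty is the bookkeeping in the inductive Mayer--Vietoris identification of $\tilde H^*(L_{S-T})$ with the cohomology of the links $\Lk(\sigma_T)$, where the flag condition on $L$ and repeated use of the cone decomposition must be handled with care.
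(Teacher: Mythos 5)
Your first two paragraphs follow essentially the paper's route: the equivalence $(1)\Leftrightarrow(3)$ is obtained from Davis's formula $H^k(W;\Z W)\cong\bigoplus_{w\in W}\tilde H^{k-1}(L_{S-\operatorname{In}(w)})$ (packaged in the paper as \cite[Corollary~4.2]{Ho0}) together with Farrell's theorem and the non-vanishing of $H^*(W;\Z W)$ for infinite $W$. One caveat there: your justification that $\{w:\operatorname{In}(w)=T\}$ is infinite for every non-empty spherical $T$ --- ``$W_T$ is finite while $W$ is infinite'' --- is not sufficient. For $W=D_\infty\times\Z/2=\langle s,t\rangle\times\langle u\rangle$ and $T=\{u\}$, the only $w$ with $\operatorname{In}(w)=\{u\}$ is $w=u$. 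Irreducibility of $(W,S)$ is what rescues the statement, and this is exactly what \cite[Corollary~4.2]{Ho0} encapsulates; as written, your dichotomy argument has a gap that happens not to matter only because the lemma assumes irreducibility.

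The genuine gap is in your treatment of statement (2). The paper gets $(1)\Leftrightarrow(2)$ in one line from Davis's \cite[Theorem~5.5]{Dav0}: for irreducible $(W,S)$ the direct-product decomposition there is forced to be trivial, so $W$ is a virtual Poincar\'e duality group iff $L$ is a generalized homology sphere. You instead propose to prove $(3)\Leftrightarrow(2)$ directly by a Mayer--Vietoris induction, i.e.\ to show that acyclicity of all deletions $L_{S-T}$ forces every link $\Lk(\sigma,L)$ to be a homology sphere of the correct dimension. That is not a routine induction: your vertex-level computation only yields $\tilde H^i(\Lk(s))\cong\tilde H^{i+1}(L)$, and to iterate into $\Lk(\sigma)$ for higher-dimensional $\sigma$ you must (a) identify the deletions \emph{inside} $\Lk(\sigma)$ with suitable full subcomplexes governed by condition (3) for $L$, (b) verify that the subsystem whose nerve is $\Lk(\sigma)$ again satisfies the standing hypotheses (infinite, irreducible --- neither is automatic), and (c) control the dimension shift so that each link has the homology of a sphere of the \emph{correct} dimension, not merely of some sphere. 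None of this is supplied, and it is precisely the hard content of Davis's theorem; ``relatively soft to see that (1) forces $L$ to be a homology manifold'' is not accurate. The fix is simply to cite \cite[Theorem~5.5]{Dav0} for $(1)\Leftrightarrow(2)$, as the paper does, rather than reprove it.
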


Here a generalized homology $n$-sphere is a polyhedral homology $n$-manifold with 
the same homology as an $n$-sphere $\S^n$ 
(cf.\ \cite[Section~5]{Dav0}, \cite{Dav1}, \cite[p.374]{Mu}, \cite{Qu}).
Also detail of (virtual) Poincar\'{e} duality groups is found 
in \cite{Br0}, \cite{Dav0}, \cite{Dav1}, \cite{F}.

In Lemma~\ref{Lemma:key}, we particularly note that 
the statement (3) is a local condition of $L$ which determines 
a global structure of $L$ as the statement (2).
From this observation, it seems that the following theorem holds.
(However the proof is not so obvious.)

\begin{Theorem}\label{Thm1}
Let $\Gamma$ be a finite graph with at least 3 vertices 
and let $(W,S)$ be the right-angled Coxeter system associated by $\Gamma$ 
(i.e.\ the 1-skeleton of the nerve $L(W,S)$ of $(W,S)$ is $\Gamma$).
If the Coxeter group $W$ is an irreducible virtual Poincar\'{e} duality group, 
then the graph $\Gamma$ is reconstructible.
Hence, 
\begin{enumerate}
\item[${\rm (i)}$] 
if $\Gamma$ is the 1-skeleton of some simplicial flag complex $L$ 
which is a generalized homology sphere, 
then the graph $\Gamma$ is reconstructible, and 
\item[${\rm (ii)}$] 
in particular, if $\Gamma$ is the 1-skeleton of 
some flag triangulation $L$ of some $n$-sphere $\S^n$ ($n\ge 1$), 
then the graph $\Gamma$ is reconstructible.
\end{enumerate}
\end{Theorem}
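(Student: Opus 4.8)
The plan is to translate everything into flag complexes and right-angled Coxeter groups through the equivalences (1)--(4) recorded above, and then to reconstruct $L$ (equivalently $\Gamma$) from its deck by exploiting the rigidity of a generalized homology sphere. Reformulated, a competitor $\Gamma'$ supplies a bijection $f\colon S\to S'$ together with isomorphisms $L_{S-\{s\}}\cong L'_{S'-\{f(s)\}}$ of the vertex-deleted full subcomplexes (the cards), and the goal is to produce an isomorphism $L\cong L'$. A first observation is that the deck already sees \emph{every} proper full subcomplex: if $U\subsetneq S'$, then choosing any $v\in S'-U$ exhibits $L'_U$ as a full subcomplex of the card $L'_{S'-\{v\}}\cong L_{S-\{f^{-1}(v)\}}$, so $L'_U$ is isomorphic to a full subcomplex of $L$, and symmetrically. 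Before the main argument I would dispose of the degenerate cases: if $W$ is finite then $L$ is a simplex and $\Gamma$ is a complete graph, hence regular and reconstructible by~(i); and if $(W,S)$ is reducible then $\Gamma$ is a join, so its complement is disconnected and therefore reconstructible by~(iii), whence $\Gamma$ is reconstructible as well. This leaves the case in which Lemma~\ref{Lemma:key} applies, namely $(W,S)$ irreducible with $W$ infinite.

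Next I would record the consequences of the hypothesis. Since $W$ is a virtual Poincar\'e duality group, Lemma~\ref{Lemma:key} shows that $L$ is a generalized homology $n$-sphere and that $\tilde H^i(L_{S-T})=0$ for all $i$ and every non-empty spherical $T$. Two structural facts will be used repeatedly. First, taking $T=\{s\}$, every card $L_{S-\{s\}}$ is acyclic; consequently \emph{every card of $L'$ is acyclic as well}, being isomorphic to a card of $L$. Second, because $L$ is a flag complex, the link of a vertex is the full subcomplex on its neighbourhood, $\Lk(s)=L_{N(s)}$, which is a generalized homology $(n-1)$-sphere bounding the acyclic card. Writing $L'=L'_{S'-\{v\}}\cup\overline{\mathrm{St}}(v)$ and applying reduced Mayer--Vietoris with the acyclic card and the contractible closed star gives $\tilde H_i(L')\cong\tilde H_{i-1}(\Lk_{L'}(v))$ for every vertex $v$ of $L'$; this is the precise sense in which the local data force the global homology.

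The heart of the proof, and the step I expect to be the main obstacle, is to show that $L'$ is \emph{also} a generalized homology $n$-sphere. By the Mayer--Vietoris relation just displayed, it suffices to prove that every vertex link $\Lk_{L'}(v)=L'_{N'(v)}$ is a generalized homology $(n-1)$-sphere, for then $L'$ is a homology $n$-manifold with the homology of $\S^n$. Equivalently one must establish condition~(3) of Lemma~\ref{Lemma:key} for $L'$: for a singleton $T'=\{t'\}$ this is immediate from $L'_{S'-\{t'\}}\cong L_{S-\{f^{-1}(t')\}}$, but for a spherical $T'$ with $|T'|\ge 2$ the difficulty is genuine. Choosing $v\in T'$ realizes $L'_{S'-T'}$ inside a card as $L_{S-(\{s\}\cup\sigma)}$ for a simplex $\sigma$ of $L$ with $s\notin\sigma$; the obstruction is that the adjacencies of the deleted vertex $s$ to $\sigma$ are \emph{not} recorded by the card $L_{S-\{s\}}$, so $\{s\}\cup\sigma$ need not be spherical and condition~(3) for $L$ does not apply directly. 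To control this I would use Alexander--Poincar\'e duality inside the homology sphere $L$, which converts $\tilde H^i(L_{S-(\{s\}\cup\sigma)})$ into the homology of $L_{\{s\}\cup\sigma}$, together with an induction on dimension that recovers the unseen neighbourhoods of the deleted vertices by cross-referencing several cards at once, so as to rule out any deck-compatible configuration that would make a link of $L'$ fail to be a homology sphere. This is where the combinatorial work concentrates.

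Granting that $L'$ is a generalized homology $n$-sphere, the isomorphism is then easy to assemble. Each card is a flag homology $n$-ball, and its boundary --- intrinsically characterized as the subcomplex along which the local homology ceases to be that of a sphere --- is exactly the relevant vertex link; capping this boundary off with a cone on a single new vertex recovers the ambient complex, so $L$ is reconstructed from any card $L_{S-\{s\}}$ and $L'$ from the matching card $L'_{S'-\{f(s)\}}$. Since these cards are isomorphic, the two cappings are isomorphic, giving $L\cong L'$ and hence $\Gamma\cong\Gamma'$. The remaining assertions follow at once: for~(i), a generalized homology sphere yields, via Lemma~\ref{Lemma:key} and the same reduction to the irreducible infinite case, a virtual Poincar\'e duality group $W$; and for~(ii), a flag triangulation of $\S^n$ with $n\ge 1$ is in particular a generalized homology $n$-sphere.
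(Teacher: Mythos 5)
Your overall architecture is reasonable and your endgame coincides with the paper's: once both $L$ and $L'$ are known to be generalized homology $n$-spheres ($n\ge 1$), the link of the deleted vertex is the intrinsic boundary of the acyclic card, so the card isomorphism carries $\Lk(s_0,L)$ to $\Lk(s_0',L')$ and extends over the cone, giving $L\cong L'$; the preliminary reduction to the irreducible infinite case via joins and complements is also fine and is more careful than the paper on that point. The problem is the step you yourself flag as ``the main obstacle'': you never actually prove that $L'$ is a generalized homology $n$-sphere. Your plan is to verify condition (3) of Lemma~\ref{Lemma:key} for $L'$, which for a spherical $T'$ with $|T'|\ge 2$ requires controlling $\tilde H^i(L_{S-(\{s\}\cup\sigma)})$ where $\{s\}\cup\sigma$ need not be spherical in $L$. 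The appeal to ``Alexander--Poincar\'e duality plus an induction cross-referencing several cards'' is a hope, not an argument; note that Alexander duality in $L$ gives $\tilde H^i(L_{S-(\{s\}\cup\sigma)})\cong\tilde H_{n-i-1}(L_{\{s\}\cup\sigma})$, which is genuinely nonzero (in degree $i=n-1$) precisely when no vertex of $\sigma$ is adjacent to $s$, so there is a real configuration to rule out, and ruling it out amounts to recovering the adjacencies of the deleted vertex --- essentially the thing being proved. A secondary issue on the same step: even granting condition (3) for $L'$, invoking Lemma~\ref{Lemma:key} to conclude that $L'$ is a generalized homology sphere presupposes that $(W',S')$ is irreducible and $W'$ infinite, which you do not check.

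The paper avoids all of this by working with local homology rather than with the global subcomplexes $L'_{S'-T'}$. Since a generalized homology $n$-sphere with $n\ge 1$ is in particular a homology $n$-manifold, statement (i) reduces to Theorem~\ref{Thm2}, whose proof transfers the manifold condition simplex by simplex: $H_*(|L'|,|L'|-\hat{\sigma}')$ depends only on the closed star of $\sigma'$, hence is computed inside any card $L'_{S'-\{v'\}}$ with $v'$ outside that closed star, hence agrees with $H_*(|L|,|L|-\hat{\sigma})\cong H_*(\S^n)$ for the corresponding simplex $\sigma$ of the isomorphic card of $L$ (with the deleted vertex of that card chosen outside $\overline{\mathrm{St}}(\sigma,L)$; this is \cite[Lemma~63.1]{Mu} plus excision). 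This local-to-local transfer completely sidesteps the difficulty you identify, and the global homology of $L'$ then follows from your own Mayer--Vietoris identity $\tilde H_i(L')\cong\tilde H_{i-1}(\Lk(v'))$. If you replace your condition-(3) plan by this excision argument, the remainder of your write-up goes through.
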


Here, based on this motivation, 
we investigate a finite graph which is the 1-skeleton of some simplicial flag complex 
which is a {\it homology manifold} as an extension of a generalized homology sphere, 
and we prove the following theorem.
(Hence as a corollary, we also obtain Theorem~\ref{Thm1}.)

\begin{Theorem}\label{Thm2}
Let $\Gamma$ be a finite graph with at least 3 vertices.
\begin{enumerate}
\item[${\rm (i)}$] 
If $\Gamma$ is the 1-skeleton of some simplicial flag complex $L$ 
which is a homology $n$-manifold ($n\ge 1$), 
then the graph $\Gamma$ is reconstructible.
\item[${\rm (ii)}$] 
In particular, if $\Gamma$ is the 1-skeleton of 
some flag triangulation $L$ of some $n$-manifold ($n\ge 1$), 
then the graph $\Gamma$ is reconstructible.
\end{enumerate}
\end{Theorem}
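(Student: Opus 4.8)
The plan is to isolate the essential case and then reconstruct $\Gamma$ from the local structure of a single card. First, (ii) reduces to (i), since a flag triangulation of a topological $n$-manifold is in particular a flag complex which is a homology $n$-manifold. For (i) I would dispose of the degenerate cases using the classical results quoted in the introduction: if $\Gamma$ is disconnected it is reconstructible by (iii), and if $L$ is a generalized homology sphere it is reconstructible by Theorem~\ref{Thm1}(i). The case $n=1$ is also covered, as a flag homology $1$-manifold is a disjoint union of cycles, hence disconnected or a single $2$-regular graph, reconstructible by (iii) or (i). So I may assume $n\ge 2$, that $\Gamma$ is connected, and that $L$ is not a generalized homology sphere. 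In that remaining case $L$ is irreducible, because a join of two complexes is a homology manifold only when both factors are homology spheres, in which case the join is a homology sphere; thus $(W,S)$ is irreducible with $W$ infinite, and Lemma~\ref{Lemma:key} applies.

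The engine is a local recognition of how a deleted vertex was attached. Since $L$ is a homology $n$-manifold, the link $\Lk_L(v)=L_{N(v)}$ of every vertex $v$ is a generalized homology $(n-1)$-sphere. Fix a card $C=\Gamma_{S-\{s\}}$. For $v\notin N(s)$ one has $\Lk_C(v)=\Lk_L(v)$, still such a sphere; but for $v\in N(s)$ one has $\Lk_C(v)=\big(\Lk_L v\big)_{N(v)-\{s\}}$, a punctured $(n-1)$-sphere, hence acyclic and (as $n-1\ge 1$) \emph{not} a generalized homology $(n-1)$-sphere. Therefore $N(s)$ is recovered intrinsically from $C$ as the set $D(C)$ of \emph{defective} vertices, those whose link in $C$ fails to be a generalized homology $(n-1)$-sphere; and detecting defectiveness is itself a condition on proper full subcomplexes, since by Lemma~\ref{Lemma:key} applied to the parabolic system on $N(v)$, the link $L_{N(v)}$ is a generalized homology $(n-1)$-sphere exactly when $\tilde H^i\big(L_{N(v)-T}\big)=0$ for all $i$ and all non-empty spherical $T\subseteq N(v)$, every such $L_{N(v)-T}$ being a full subcomplex on a proper subset. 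Adjoining to $C$ one new vertex joined to exactly $D(C)$ yields a canonical completion $\widehat{C}$, and since $D(\Gamma_{S-\{s\}})=N(s)$ one checks directly that $\widehat{\Gamma_{S-\{s\}}}\cong\Gamma$; as $D(C)$ is intrinsic, $\widehat{C}$ is invariant under isomorphism of cards.

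It remains to feed this into the definition of reconstructibility. Let $\Gamma'$ have the same deck via a bijection $f$ as in $(*)$, fix $s$, and set $C:=\Gamma'_{S'-\{f(s)\}}\cong\Gamma_{S-\{s\}}$, so that $\widehat{C}\cong\Gamma$ by the previous paragraph. Thus it suffices to show that $\Gamma'$ is recovered from its own card by the same completion, i.e.\ that $N_{\Gamma'}(f(s))$ equals the intrinsic defective set $D(C)$; equivalently, that every vertex of $\Gamma'$ is a manifold point, so that $\Gamma'$ is again the $1$-skeleton of a flag homology $n$-manifold. Granting this, $\Gamma'\cong\widehat{C}\cong\Gamma$, which is the assertion. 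To certify the links of $\Gamma'$ I would transport the sphere condition across the given isomorphisms: for $w'\in S'$ choose $t$ with $f(t)\notin N_{\Gamma'}(w')$ and such that the isomorphism $\Gamma'_{S'-\{f(t)\}}\cong\Gamma_{S-\{t\}}$ sends $w'$ to a non-neighbour of $t$; then $\Lk_{\Gamma'}(w')=\Lk_{\Gamma'_{S'-\{f(t)\}}}(w')$ is isomorphic to an undamaged vertex link of $L$, hence a generalized homology $(n-1)$-sphere.

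The main obstacle is precisely this recognizability step. The property ``$\Lk(v)$ is a generalized homology $(n-1)$-sphere'' is a global topological condition on an unbounded subcomplex rather than a bounded local pattern countable by a Kelly-type argument, and the bijection $f$ of $(*)$ carries no a priori compatibility with links. One must therefore argue, by a counting analysis of the vertex correspondences together with the cohomology-vanishing criterion of Lemma~\ref{Lemma:key} on the proper subcomplexes $L_{N(v)-T}$, that for each $w'\in S'$ a card exhibiting $\Lk_{\Gamma'}(w')$ as an undamaged link always exists. Once every vertex of $\Gamma'$ is certified as a manifold point, the canonical completion is forced and $\Gamma'\cong\Gamma$ follows.
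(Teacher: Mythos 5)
Your overall architecture is the same as the paper's: first upgrade the hypothesis from $L$ to $L'$, showing that $L'$ is again a flag homology $n$-manifold, and then recover $N(s)$ intrinsically inside a single card --- you phrase this as the set of ``defective'' vertices whose card-link fails to be a generalized homology $(n-1)$-sphere, while the paper phrases it as the boundary of the relative homology manifold $(L_{S-\{s_0\}},\Lk(s_0,L))$; these are the same recognition. The preliminary reductions (to $n\ge 2$, connected, irreducible, not a generalized homology sphere) are harmless but not needed, and the ``canonical completion'' $\widehat{C}$ is a clean repackaging of the paper's extension of $\phi$ by $s_0\mapsto s'_0$.

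The genuine gap is the one you yourself flag and then defer: certifying that every vertex link of $L'$ is a generalized homology $(n-1)$-sphere. Your transport argument needs, for each $w'\in S'$, a vertex $t$ satisfying two conditions at once: $f(t)\notin N_{\Gamma'}(w')\cup\{w'\}$, so that $\Lk_{\Gamma'}(w')$ is undamaged in the card $\Gamma'_{S'-\{f(t)\}}$, and $\psi(w')\notin N_{\Gamma}(t)$ for the card isomorphism $\psi$, so that the target link is undamaged in $\Gamma_{S-\{t\}}$. The first condition can be arranged (the degree sequence is reconstructible, and no vertex of $\Gamma$ is adjacent to all others since $L$ is not a cone). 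But the second is exactly where the argument becomes circular: since $\psi$ preserves the intrinsic property ``card-link is a generalized homology $(n-1)$-sphere,'' requiring $\psi(w')\notin N_{\Gamma}(t)$ is equivalent to requiring that $\Lk(w',L')$ itself be such a sphere, which is what you are trying to prove. The ``counting analysis'' you gesture at does not close in the obvious way: summing over all cards the number of vertices with spherical card-link gives $\sum_t(|S|-1-\deg t)$ on both sides, but a hypothetical bad vertex $w'$ (whose true link is only a punctured sphere) could in principle compensate by acquiring spherical card-links in cards $\Gamma'_{S'-\{f(t)\}}$ with $f(t)\in N_{\Gamma'}(w')$, and excluding this requires an actual argument about full subcomplexes of homology manifolds with boundary that you do not supply. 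This is precisely the step the paper discharges by verifying the local homology condition $H_i(|L'|,|L'|-\hat{\sigma'})\cong H_i(\S^n)$ at every simplex of $L'$ via Munkres' Lemma~63.1 and a card containing the relevant closed star; without some version of that verification, $\widehat{C}$ need not coincide with $\Gamma'$ and the proof is incomplete.
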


Here detail of homology manifolds is found in 
\cite[Section~5]{Dav0}, \cite{Dav1}, \cite[p.374]{Mu}, \cite{Qu}.

\section{Proof of Theorem~\ref{Thm2}}

We prove Theorem~\ref{Thm2}.

\begin{proof}[Proof of Theorem~\ref{Thm2}]
Let $\Gamma$ be a finite graph with at least 3 vertices 
which is the 1-skeleton 
of some simplicial flag complex $L$ which is 
a homology manifold of dimension $n \ge 1$.
Then we show that the graph $\Gamma$ is reconstructible.

Let $\Gamma'$ be a finite graph 
and let $L'$ be the finite simplicial flag complex associated by $\Gamma'$.
Also let $S$ and $S'$ be the vertex sets of the graphs $\Gamma$ and $\Gamma'$ respectively.

Now we suppose that the condition $(*)$ holds:
\begin{enumerate}
\item[$(*)$] There exists a bijection $f:S\rightarrow S'$ such that 
the subgraphs $\Gamma_{S-\{s\}}$ and $\Gamma'_{S'-\{f(s)\}}$ are isomorphic for any $s\in S$.
\end{enumerate}

To show that the graph $\Gamma$ is reconstructible, 
we prove that the two graphs $\Gamma$ and $\Gamma'$ are isomorphic, 
i.e., 
the two simplicial flag complexes $L$ and $L'$ associated by $\Gamma$ and $\Gamma'$ respectively 
are isomorphic.

Let $v_0 \in S$ and let $v'_0=f(v_0)$.
Then 
the two subgraphs $\Gamma_{S-\{v_0\}}$ and $\Gamma'_{S'-\{v'_0\}}$ are isomorphic 
by the assumption $(*)$, 
and 
the two subcomplexes $L_{S-\{v_0\}}$ and $L'_{S'-\{v'_0\}}$ are isomorphic.
Let $\phi$ be an isomorphism from $L_{S-\{v_0\}}$ to $L'_{S'-\{v'_0\}}$.

If for any $a \in \Lk(v_0,L)^{(0)}$, $\phi(a) \in \Lk(v'_0,L')^{(0)}$ 
then we obtain an isomorphism $\bar{\phi}:L\rightarrow L'$ 
from $\bar{\phi}|_{L_{S-\{v_0\}}}=\phi$ and $\bar{\phi}(v_0)=v'_0$ (since $\deg v_0=\deg v'_0$),
hence $L$ and $L'$ are isomorphic.

Now we suppose that there exists $a_0\in S-\{v_0\}$ such that 
$a_0\not\in \Lk(v_0,L)^{(0)}$ and 
$a'_0:=\phi(a_0)\in \Lk(v'_0,L')^{(0)}$.

Here if there does not exist $u'_0 \in S'-\St(a'_0,L')^{(0)}$, 
then $\St(a'_0,L')^{(0)}=S'$, 
where $\St(a'_0,L')$ means the closed star of $a'_0$ in $L'$.
Hence $[a'_0,b'] \in {L'}^{(1)}$ for any $b'\in S'-\{a'_0\}$.
Since $\deg a_0 = \deg a'_0$ and $|S|=|S'|$,
$[a_0,b] \in {L}^{(1)}$ for any $b\in S-\{a_0\}$.
This particularly implies $[a_0,v_0]\in L^{(1)}$.
This is a contradiction 
because it means $a_0\in \Lk(v_0,L)^{(0)}$.

Thus we suppose that there exists $u'_0 \in S'-\St(a'_0,L')^{(0)}$.

Let $u_0:=f^{-1}(u'_0)$.
Then by the assumption $(*)$, 
the two subcomplexes $L_{S-\{u_0\}}$ and $L'_{S'-\{u'_0\}}$ are isomorphic and 
let $\psi$ be an isomorphism from $L_{S-\{u_0\}}$ to $L'_{S'-\{u'_0\}}$.

Then 
\begin{align*}
\Lk(\psi^{-1}(a'_0),L_{S-\{u_0\}}) &\cong \Lk(a'_0,L'_{S'-\{u'_0\}}) \\ 
&\cong \Lk(a'_0,L'), 
\end{align*}
since $\psi$ is an isomorphism and $u'_0 \not\in \St(a'_0,L')$.
Also we obtain 
\begin{align*}
\St(\psi^{-1}(a'_0),L_{S-\{u_0\}}) &\cong \St(a'_0,L'_{S'-\{u'_0\}}) \\ 
&\cong \St(a'_0,L').
\end{align*}

Then 
$$ \St(a'_0,L'_{S'-\{v'_0\}}) {\underset{\neq}{\subset}} \St(a'_0,L') 
\cong \St(\psi^{-1}(a'_0),L_{S-\{u_0\}}). $$
Here we note that $\St(\psi^{-1}(a'_0),L_{S-\{u_0\}})$ is 
either 
\begin{enumerate}
\item[(a)] the closed star $\St(\psi^{-1}(a'_0),L)$ of the vertex $\psi^{-1}(a'_0)$ 
in the homology $n$-manifold $L$, or 
\item[(b)] $\St(\psi^{-1}(a'_0),L)-u_0$ where $u_0\in \Lk(\psi^{-1}(a'_0),L)$, 
\end{enumerate}
and also note that $\St(a'_0,L'_{S'-\{v'_0\}})=\St(a'_0,L')-v'_0$.
Hence we obtain that 
\begin{enumerate}
\item[(I)] $\St(a'_0,L'_{S'-\{v'_0\}})$ is isomorphic to some closed star 
deleted one or two vertices from its link in the homology $n$-manifold $L$.
\end{enumerate}

On the other hand, 
$$ 
\St(a'_0,L'_{S'-\{v'_0\}}) \cong \St(a_0,L_{S-\{v_0\}}) 
\cong \St(a_0,L), $$
since $\phi$ is an isomorphism and $a_0\not\in \St(v_0,L)$.
Here we note that $\St(a_0,L)$ is the closed star in the homology $n$-manifold $L$.
Hence we obtain that 
\begin{enumerate}
\item[(II)] $\St(a'_0,L'_{S'-\{v'_0\}})$ is isomorphic to some closed star in the homology $n$-manifold $L$.
\end{enumerate}

Then (I) and (II) imply the contradiction.
Indeed the following claim holds.

\vspace*{2mm}

{\bf Claim.} Let $A=\St(a)$ be a closed star of a vertex $a$ in a homology $n$-manifold and 
let $B=\St(b)-\{c_1,c_2\}$ be a closed star of a vertex $b$ 
deleted one or two vertices $\{c_1,c_2\} \subset \Lk(b)$ in a homology $n$-manifold.
Then the simplicial complexes $A$ and $B$ are not isomorphic.

\vspace*{1.5mm}

We first note that every triangulated homology $n$-manifold 
is a union of $n$-simplexes (\cite[Corollary~63.3(a)]{Mu}).
Hence $A=\St(a)$ and $\St(b)$ are unions of $n$-simplexes containing $a$ and $b$ respectively.
Then there exists an $n$-simplex $\sigma_0$ such that $c_1 \in \sigma_0 \subset \St(b)$.

Here if $c_1 \neq c_2$ then we can take $\sigma_0$ as $c_2 \not\in \sigma_0$.
Indeed if $c_1 \neq c_2$ and $c_2 \in \sigma_0$ then $[c_1,c_2] \subset \sigma_0$ and 
we can consider $(n-1)$-simplex $\tau$ as $\tau^{(0)}=\sigma_0^{(0)}-\{c_2\}$.
Then by \cite[Corollary~63.3(b)]{Mu}, 
there exist precisely two $n$-simplexes containing $\tau$ as a face.
Hence we can take an $n$-simplex $\sigma'_0$ containing $\tau$ as a face and $\sigma'_0\neq \sigma_0$.
Then $c_1 \in \sigma'_0 \subset \St(b)$ and $c_2 \not\in \sigma'_0$.
Hence in this case we retake $\sigma_0$ as $\sigma'_0$.

Now $\sigma_0$ is an $n$-simplex 
such that $c_1 \in \sigma_0 \subset \St(b)$ and 
if $c_1 \neq c_2$ then $c_2 \not\in \sigma_0$.
Let $\tau_0$ be the $(n-1)$-simplex as $\tau_0^{(0)}=\sigma_0^{(0)}-\{c_1\}$.
Then we note that $\tau_0 \subset \St(b)-\{c_1,c_2\}=B$.

Now we suppose that $A$ and $B$ are isomorphic and 
there exists an isomorphism $g:B\to A$.
Then $g(\tau_0)$ is an $(n-1)$-simplex in $A$.
By \cite[Corollary~63.3(b)]{Mu}, 
there exist precisely two $n$-simplexes $\bar{\sigma}_1$ and $\bar{\sigma}_2$ 
containing $g(\tau_0)$ as a face in $A$.
Then $g^{-1}(\bar{\sigma}_1)$ and $g^{-1}(\bar{\sigma}_2)$ are 
$n$-simplexes containing $\tau_0$ as a face in $B$, 
since $g:B\to A$ is an isomorphism.
Here $g^{-1}(\bar{\sigma}_1)$, $g^{-1}(\bar{\sigma}_2)$ and $\sigma_0$ 
are distinct $n$-simplexes containing $\tau_0$ as a face in $\St(b)$.
This contradicts to \cite[Corollary~63.3(b)]{Mu}.

Thus the simplicial complexes $A$ and $B$ are not isomorphic.

\vspace*{2mm}

Hence, there does not exist $a_0\in S-\{v_0\}$ such that 
$a_0\not\in \Lk(v_0,L)^{(0)}$ and $\phi(a_0) \in \Lk(v'_0,L')^{(0)}$, that is, 
for $a \in S-\{v_0\}$, 
$a \in \Lk(v_0,L)^{(0)}$ if and only if $\phi(a) \in \Lk(v'_0,L')^{(0)}$, 
since $\deg v_0=\deg v'_0$.
Hence the map $\bar{\phi}:S\rightarrow S'$ defined 
by $\bar{\phi}|_{S-\{v_0\}}=\phi$ and $\bar{\phi}(v_0)=v'_0$ 
induces an isomorphism of the two graphs $\Gamma$ and $\Gamma'$.

Therefore the graph $\Gamma$ is reconstructible.
\end{proof}

\section{Virtual Poincar\'{e} duality Coxeter groups and reconstructible graphs}

We introduce a relation of 
virtual Poincar\'{e} duality Coxeter groups and reconstructible graphs, 
which is our motivation of this paper.

\begin{Definition}[{cf.\ \cite{Br0}, \cite{Dav0}, \cite{Dav1}, \cite{F}}]
A torsion-free group $G$ is called an 
{\it $n$-dimensional Poincar\'{e} duality group}, 
if $G$ is of type FP and if 
$$ H^i(G;\Z G)\cong 
\left\{ 
\begin{array}{ll}
0  & (i\neq n) \\
\Z & (i=n).
\end{array}
\right.$$
Also a group $G$ is called a 
{\it virtual Poincar\'{e} duality group}, 
if $G$ contains a torsion-free subgroup of finite-index 
which is a Poincar\'{e} duality group.
\end{Definition}

On Coxeter groups and (virtual) Poincar\'{e} duality groups, 
the following results are known.

\begin{Theorem}[{Farrell \cite[Theorem~3]{F}}]\label{Thm:Farrell}
Suppose that $G$ is a finitely presented group of type FP, 
and let $n$ be the smallest integer such that 
$H^n(G;\Z G) \neq 0$.
If $H^n(G;\Z G)$ is a finitely generated abelian group, 
then $G$ is an $n$-dimensional Poincar\'{e} duality group.
\end{Theorem}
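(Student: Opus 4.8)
The plan is to exhibit $G$ as a duality group whose dualizing module is $\Z$, and then quote the Bieri--Eckmann characterization of Poincar\'{e} duality groups. Since $G$ is finitely presented and of type FP, it has finite cohomological dimension; write $m=\mathrm{cd}(G)$, so that $H^m(G;\Z G)\neq 0$, $H^i(G;\Z G)=0$ for $i>m$, and $n\le m$, where $n$ is the least degree in which $H^*(G;\Z G)$ is nonzero. Throughout I would regard $D:=H^n(G;\Z G)$ as a $\Z G$-module via the residual action on the coefficient ring, and study it through the duality functors.

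First I would show that $H^*(G;\Z G)$ is concentrated in the single degree $n$, i.e.\ $H^i(G;\Z G)=0$ for all $i\neq n$, so in particular $m=n$. This is where the finite-generation hypothesis must enter decisively. Using that, for a group of type FP, the functors $H^i(G;-)$ commute with arbitrary direct products and with filtered colimits, I would dualize a finite resolution $P_\bullet\to\Z$ by finitely generated projectives and analyze the cochain complex $\mathrm{Hom}_{\Z G}(P_\bullet,\Z G)$: a nonzero cohomology class in a degree above $n$ would, after passing to products and colimits of coefficients, be incompatible with $D$ being merely finitely generated over $\Z$ in the bottom degree, so the complex must collapse onto degree $n$.

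Granting concentration, $D$ is the dualizing module, so by the Bieri--Eckmann theorem $G$ is a duality group of dimension $n$ and $D$ is automatically $\Z$-torsion-free; being also finitely generated as an abelian group, $D\cong\Z^r$ for some $r$. I would then pin down $r=1$ and identify the action: the defining duality isomorphisms $H^k(G;A)\cong H_{n-k}(G;D\otimes_{\Z}A)$, applied with $A=\Z$ and $A=\Z G$ together with the vanishing $H^0(G;\Z G)=0$ for the infinite group $G$, constrain the integral representation $G\to GL_r(\Z)$ on $D$ by a rank/determinant computation, forcing $r=1$ and the action to factor through $\{\pm 1\}$. With $D\cong\Z$ this is exactly the statement that $G$ is an $n$-dimensional Poincar\'{e} duality group.

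The main obstacle is the concentration step. A priori $H^*(G;\Z G)$ could be spread over several degrees, and ruling this out is precisely the nontrivial content of Farrell's theorem; the other steps are essentially formal bookkeeping within Bieri--Eckmann duality theory. I expect the finite generation of the bottom term $D$ to be the lever that propagates upward through the dualized resolution and collapses the complex onto degree $n$, but converting this expectation into a rigorous argument—controlling the higher cohomology quantitatively from finiteness at the bottom—is the delicate point.
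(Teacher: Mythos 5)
This statement is Farrell's theorem, quoted with a citation to \cite{F}; the paper supplies no proof of it, so there is nothing internal to compare your argument against and I can only assess the proposal on its own terms.

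As a proof it has a genuine gap, and it sits exactly where you say it does: the concentration step. Showing that finite generation of the \emph{bottom} nonvanishing group $H^n(G;\Z G)$ forces $H^i(G;\Z G)=0$ for all $i>n$ (and forces $H^n(G;\Z G)\cong\Z$) is not ``the delicate point'' of the proof --- it \emph{is} the theorem; everything after it is, as you note, formal Bieri--Eckmann bookkeeping. The mechanism you propose (that a class in higher degree would be ``incompatible, after passing to products and colimits of coefficients, with $D$ being finitely generated in the bottom degree'') is a hope, not an argument: nothing in the formal properties of $\operatorname{Hom}_{\Z G}(P_\bullet,\Z G)$ for an FP group links the size of $H^n$ to the vanishing of $H^{n+1}$ without new input. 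Farrell's actual input is a higher-dimensional analogue of the Hopf--Specker theorem on ends (the case $n=1$: a finitely presented group with $H^1(G;\Z G)$ finitely generated and nonzero has two ends, so $H^1(G;\Z G)\cong\Z$), and finite presentability of $G$ is used essentially there; your outline never uses it.

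Two further points would need repair even granting concentration. First, in the Bieri--Eckmann characterization, torsion-freeness of $H^n(G;\Z G)$ is a \emph{hypothesis}, not a consequence, so ``$D$ is automatically $\Z$-torsion-free'' is backwards; with $D$ merely a finitely generated abelian group you must rule out torsion separately. Second, the step forcing $r=1$ is not supplied: the duality isomorphisms you invoke with $A=\Z$ and $A=\Z G$ give $H^n(G;\Z G)\cong D$ as abelian groups, which holds for every $r$, and $H^0(G;\Z G)=0$ yields no contradiction for $r>1$ by itself. The standard fact behind this step --- that the dualizing module of a duality group is either infinite cyclic or not finitely generated --- is again a theorem requiring proof, not a determinant computation.
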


\begin{Remark}
It is known that every infinite Coxeter group $W$ 
contains some torsion-free subgroup $G$ of finite-index in $W$ 
which is a finitely presented group of type FP and 
$H^*(G;\Z G)$ is isomorphic to $H^*(W;\Z W)$.
Hence 
if $n$ is the smallest integer such that $H^n(W;\Z W) \neq 0$ and 
if $H^n(W;\Z W)$ is finitely generated (as an abelian group), 
then $W$ is a virtual Poincar\'{e} duality group of dimension $n$.
\end{Remark}

\begin{Theorem}[{Davis \cite[Theorem~5.5]{Dav0}}]\label{Thm:Davis}
Let $(W,S)$ be a Coxeter system.
Then the following statements are equivalent:
\begin{enumerate}
\item[(1)] $W$ is a virtual Poincar\'{e} duality group of dimension $n$.
\item[(2)] $W$ decomposes as a direct product $W=W_{T_0}\times W_{T_1}$ 
such that $T_1$ is a spherical subset of $S$ and 
the simplicial complex $L_{T_0}=L(W_{T_0},T_0)$ associated by $(W_{T_0},T_0)$ 
is a generalized homology $(n-1)$-sphere.
\end{enumerate}
\end{Theorem}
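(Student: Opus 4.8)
The plan is to route everything through the Davis complex $\Sigma=\Sigma(W,S)$, the contractible CAT(0) complex on which $W$ acts properly and cocompactly. Since Coxeter groups are virtually torsion-free, I fix a torsion-free finite-index subgroup $\Gamma\le W$ of type FP; then $\Sigma$ is a cocompact model for $E\Gamma$, so $H^*(W;\Z W)\cong H^*(\Gamma;\Z\Gamma)\cong H^*_c(\Sigma)$. By the Bieri--Eckmann criterion together with Theorem~\ref{Thm:Farrell} (applied to $\Gamma$), condition (1) is equivalent to the topological statement that $H^k_c(\Sigma)$ is $\Z$ for $k=n$ and $0$ otherwise, i.e.\ $H^*_c(\Sigma)\cong H^*_c(\R^n)$. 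The combinatorial engine is Davis's formula
\[
H^k_c(\Sigma)\cong\bigoplus_{w\in W}\tilde H^{k-1}\big(L_{S-\mathrm{In}(w)}\big),
\]
where $\mathrm{In}(w)=\{s\in S:\ell(ws)<\ell(w)\}$ is the descent set of $w$; as $w$ ranges over $W$ this ranges over all spherical subsets of $S$, with $\mathrm{In}(w)=\emptyset$ exactly for $w=e$. This formula is the bridge between the global duality condition and the local structure of the nerve $L$.

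For $(2)\Rightarrow(1)$, I would use that Davis complexes of direct products are products, so $\Sigma(W)=\Sigma(W_{T_0})\times\Sigma(W_{T_1})$. As $T_1$ is spherical, $W_{T_1}$ is finite, hence $\Sigma(W_{T_1})$ is compact and contractible, and by the K\"unneth formula $H^*_c(\Sigma(W))\cong H^*_c(\Sigma(W_{T_0}))$. Because $L_{T_0}$ is a generalized homology $(n-1)$-sphere, the structural fact that the Davis complex of $(W_{T_0},T_0)$ is a homology $n$-manifold if and only if its nerve is a generalized homology $(n-1)$-sphere shows $\Sigma(W_{T_0})$ is a contractible homology $n$-manifold; Poincar\'e--Lefschetz duality then gives $H^k_c(\Sigma(W_{T_0}))\cong H_{n-k}(\Sigma(W_{T_0}))$, which is $\Z$ for $k=n$ and $0$ otherwise by contractibility. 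Theorem~\ref{Thm:Farrell} upgrades this to (1).

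For the harder direction $(1)\Rightarrow(2)$, I would first split $W=W_{S_1}\times\cdots\times W_{S_k}$ into irreducible factors, where $S=\bigsqcup_i S_i$ is the partition into components under the relation $m(s,t)\ge 3$; then $\Sigma=\prod_i\Sigma(W_{S_i})$ and, by K\"unneth, $H^*_c(\Sigma)\cong\bigotimes_i H^*_c(\Sigma(W_{S_i}))$. Since this tensor product is concentrated in a single degree with value $\Z$, each factor is $\Z$ in a single degree $n_i$ and $0$ elsewhere, with $\sum_i n_i=n$; hence each $W_{S_i}$ is virtually Poincar\'e duality of dimension $n_i$. The finite factors have $n_i=0$ and I collect their generators into the spherical subset $T_1$; the infinite factors, with $n_i\ge 1$, I collect into $T_0$. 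It remains to show each infinite irreducible nerve $L_{S_i}$ is a generalized homology $(n_i-1)$-sphere. Here I feed the single-degree concentration back through Davis's formula: $H^k_c=0$ for $k\ne n_i$ forces $\tilde H^{j}(L_{S_i-T})=0$ for all $j\ne n_i-1$ and every spherical $T$, while the value $\Z$ in degree $n_i$, together with a count of the elements realizing each descent set, pins the surviving cohomology to $w=e$ (that is, $T=\emptyset$), yielding $\tilde H^{n_i-1}(L_{S_i})=\Z$ and $\tilde H^{*}(L_{S_i-T})=0$ for nonempty spherical $T$. Finally, since $L_{T_0}=L_{S_1}\ast\cdots$ is the join of these factors and a join of homology spheres is again a homology sphere, $L_{T_0}$ is a generalized homology $(n-1)$-sphere.

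The hard part is the closing step of $(1)\Rightarrow(2)$: promoting the global vanishing of $H^*_c(\Sigma)$ to the statement that $L_{S_i}$ is a generalized homology \emph{sphere} rather than merely a complex with the homology of a sphere. The conditions $\tilde H^*(L_{S_i-T})=0$ for spherical $T$ record global acyclicity, whereas the manifold property is local, requiring the link of every simplex of $L_{S_i}$ to be a homology sphere of the correct dimension. Bridging this gap means passing to the relative/local form of Davis's formula, computing $H_*(|\Sigma|,|\Sigma|-x)$ in terms of links in $L$, and verifying that the same spherical-subset vanishing forces every such link to be spherical. This local-to-global recognition, together with the finite-generation bookkeeping in the K\"unneth step and the degenerate cases ($W$ finite, giving $T_0=\emptyset$ and the homology $(-1)$-sphere), is where the genuine work lies.
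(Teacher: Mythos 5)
The paper does not prove this statement at all: it is imported verbatim as Davis's Theorem~5.5 from \cite{Dav0}, so there is no internal proof to compare against. Judged on its own terms, your outline correctly reproduces the architecture of Davis's argument --- identifying $H^*(W;\Z W)$ with $H^*_c(\Sigma)$, the decomposition $H^k_c(\Sigma)\cong\bigoplus_{w\in W}\tilde H^{k-1}(L_{S-\mathrm{In}(w)})$, the observation that each nonempty spherical $T$ is the descent set of infinitely many $w$ when $W$ is infinite (so finite generation kills those summands), and the splitting-off of the spherical factor $T_1$. The direction $(2)\Rightarrow(1)$ is essentially complete modulo the quoted fact that $\Sigma(W_{T_0})$ is a homology $n$-manifold when $L_{T_0}$ is a generalized homology $(n-1)$-sphere.

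The genuine gap is exactly the one you flag and then do not close: in $(1)\Rightarrow(2)$, the global condition $H^*_c(\Sigma)\cong H^*_c(\R^n)$ yields only that $\tilde H^*(L)\cong\tilde H^*(\S^{n-1})$ and $\tilde H^*(L_{S-T})=0$ for nonempty spherical $T$; it does not by itself give the \emph{local} condition that the link of every simplex of $L_{T_0}$ has the homology of a sphere of the correct dimension, which is what ``generalized homology sphere'' requires. You cannot get this from Poincar\'e duality of the group alone --- a PD$^n$ group acting freely and cocompactly on a contractible complex does not force that complex to be a homology manifold --- so the step needs the specific combinatorics of the nerve. The standard repair is an induction on $|S|$: the link of the simplex $\sigma_T$ spanned by a spherical subset $T$ in $L$ is again the nerve of a Coxeter system (for the right-angled case, $\Lk(\sigma_T,L)=L_{A_T}$ with $A_T=\{s\in S-T: m(s,t)=2 \text{ for all } t\in T\}$), one checks that the vanishing conditions for $(W,S)$ descend to these link systems, and the inductive hypothesis then identifies each link as a generalized homology sphere of the right dimension. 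Without this induction (or an equivalent computation of the local homology $H_*(|L|,|L|-x)$ via the same formula), the proof establishes only that $L_{T_0}$ has the homology of $\S^{n-1}$, which is strictly weaker than statement (2). A secondary, smaller issue: the K\"unneth step over $\Z$ needs the factors' compactly supported cohomology to be free and finitely generated before you can conclude that a one-dimensional total forces each factor to be concentrated in a single degree; this is true here but deserves the same finite-generation bookkeeping you apply to the whole group.
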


\begin{Theorem}[{\cite[Corollary~4.2]{Ho0}}]\label{Thm:Hos}
Let $(W,S)$ be an infinite irreducible Coxeter system, let $L=L(W,S)$ and let $0 \le i\in \Z$.
Then the following statements are equivalent:
\begin{enumerate}
\item[(1)] $H^i(W;\Z W)$ is finitely generated.
\item[(2)] $H^i(W;\Z W)$ is isomorphic to $\tilde{H}^{i-1}(L)$.
\item[(3)] $\tilde{H}^{i-1}(L_{S-T})=0$ 
for any non-empty spherical subset $T$ of $S$.
\end{enumerate}
Here $L_{S-T}=L(W_{S-T},S-T)$.
\end{Theorem}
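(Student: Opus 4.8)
The plan is to derive the whole equivalence from a single structural input — Davis's computation of the group-ring cohomology of $W$ in terms of the nerve $L$ — followed by a combinatorial counting argument, and then to run the cycle $(3)\Rightarrow(2)\Rightarrow(1)\Rightarrow(3)$. Throughout I would assume $W$ is infinite, which is the case relevant to Lemma~\ref{Lemma:key}; note that then $S$ itself is non-spherical, so every non-empty spherical subset $T$ satisfies $S-T\neq\emptyset$. (For finite $W$ the statement fails at $i=0$, since $H^0(W;\Z W)\cong\Z$ while $\tilde H^{-1}(L)=0$, so that case is genuinely excluded.)

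First I would invoke the fact that $W$ acts properly and cocompactly on the contractible Davis complex and is virtually torsion-free, so that $H^i(W;\Z W)$ is isomorphic to the compactly supported cohomology of that complex; its chamber-and-mirror structure then yields Davis's decomposition (see \cite{Dav0}, \cite{Dav1})
$$ H^i(W;\Z W)\;\cong\;\bigoplus_{w\in W}\tilde H^{\,i-1}\bigl(L_{S-\mathrm{In}(w)}\bigr), $$
where $\mathrm{In}(w)=\{s\in S:\ell(ws)<\ell(w)\}$ is the right descent set of $w$, always a spherical subset of $S$. The two features I would exploit are: the identity element, with $\mathrm{In}(1)=\emptyset$, contributes exactly the single term $\tilde H^{\,i-1}(L)$; and every other summand is indexed by a non-empty spherical subset.

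Given the formula, two implications are immediate. For $(3)\Rightarrow(2)$, the hypothesis annihilates every summand with non-empty descent set, leaving only the identity term, so $H^i(W;\Z W)\cong\tilde H^{\,i-1}(L)$. For $(2)\Rightarrow(1)$, the complex $L$ is finite, hence $\tilde H^{\,i-1}(L)$ is a finitely generated abelian group. The remaining implication $(1)\Rightarrow(3)$ I would prove by contraposition: supposing $\tilde H^{\,i-1}(L_{S-T})\neq 0$ for some non-empty spherical $T$, I must produce infinitely many independent summands so as to contradict finite generation.

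The hard part — and the only point at which irreducibility is used — is the combinatorial claim that in an infinite irreducible $(W,S)$ every non-empty spherical $T$ occurs as $\mathrm{In}(w)$ for infinitely many $w$. This really does require irreducibility: if $T$ sat inside a finite direct factor, only finitely many elements would have descent set $T$. Granting it, the Davis formula contains infinitely many copies of the fixed non-zero group $\tilde H^{\,i-1}(L_{S-T})$, so $H^i(W;\Z W)$ cannot be finitely generated, contradicting $(1)$, which closes the cycle. To establish the claim I would factor each candidate as $w=u\,w_T$, with $w_T$ the longest element of the finite group $W_T$ and $u$ ranging over the infinitely many minimal-length representatives of $W/W_T$, and then show that for infinitely many such $u$ the element $u\,w_T$ acquires no right descent outside $T$; controlling these external descents along an infinite family, using the connectedness of the Coxeter diagram, is where the real work concentrates. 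This descent-set computation is the combinatorial crux and can be carried out with standard Coxeter-group techniques (cf.\ \cite{Bo}, \cite{Br}, \cite{Hu}).
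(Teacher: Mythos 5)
The paper itself offers no proof of this statement---it is imported wholesale as \cite[Corollary~4.2]{Ho0}---so the comparison is really with the standard argument in that reference, which your proposal reconstructs along the expected lines: Davis's decomposition
$H^i(W;\Z W)\cong\bigoplus_{w\in W}\tilde{H}^{i-1}\bigl(L_{S-\operatorname{In}(w)}\bigr)$
plus a counting argument over descent sets. The architecture is sound: $(3)\Rightarrow(2)$ by killing all summands with $\operatorname{In}(w)\neq\emptyset$, $(2)\Rightarrow(1)$ by finiteness of $L$, and $(1)\Rightarrow(3)$ by contraposition. Your side remark that the statement as printed fails for finite $W$ at $i=0$ (where $H^0(W;\Z W)\cong\Z$ but $\tilde{H}^{-1}(L)=0$) is a legitimate observation; the hypothesis that $W$ is infinite is implicitly in force wherever the theorem is used in this paper.

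The genuine gap is the step you yourself flag as the crux: the claim that in an infinite \emph{irreducible} Coxeter system every non-empty spherical subset $T$ equals $\operatorname{In}(w)$ for infinitely many $w$. Everything else in your argument is a one-line consequence of Davis's formula; this claim is the entire content of the implication $(1)\Rightarrow(3)$ and is the only place irreducibility enters, yet it is asserted with only a strategy sketch (``factor $w=uw_T$ and control the external descents using connectedness of the diagram''). The difficulty is real: writing $w=uw_T$ with $u$ a minimal-length representative of $uW_T$ guarantees $\operatorname{In}(w)\supseteq T$, but excluding descents in $S-T$ for infinitely many $u$ requires an actual induction along the Coxeter diagram (this is precisely the combinatorial lemma proved in \cite{Dav0} and \cite{Ho0}; see also Davis's book), and your text does not carry it out. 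As submitted, the proof is therefore incomplete at exactly the point where the theorem stops being formal; either prove the descent-set lemma or cite it explicitly as a known result rather than as something that ``can be carried out with standard techniques.''
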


We obtain the following lemma from results above.

\begin{Lemma}\label{Lem1}
Let $(W,S)$ be an irreducible Coxeter system where $W$ is infinite 
and let $L=L(W,S)$.
Then the following statements are equivalent:
\begin{enumerate}
\item[(1)] $W$ is a virtual Poincar\'{e} duality group.
\item[(2)] $L$ is a generalized homology sphere.
\item[(3)] $\tilde{H}^i(L_{S-T})=0$ for any $i$ and any non-empty spherical subset $T$ of $S$.
\end{enumerate}
\end{Lemma}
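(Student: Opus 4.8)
The plan is to assemble the equivalence out of the three cited structural results rather than argue from scratch: Davis's Theorem~\ref{Thm:Davis} will handle $(1)\Leftrightarrow(2)$, while Theorem~\ref{Thm:Hos} together with Farrell's Theorem~\ref{Thm:Farrell} (in the form recorded in the Remark) will handle $(1)\Leftrightarrow(3)$. Throughout I would use the two standing hypotheses, that $(W,S)$ is irreducible and that $W$ is infinite, precisely to collapse the general statements of those theorems into the clean form asserted in the Lemma.

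For $(1)\Leftrightarrow(2)$ I would invoke Theorem~\ref{Thm:Davis}, which says that $W$ is a virtual Poincar\'{e} duality group of dimension $n$ exactly when $W$ splits as $W=W_{T_0}\times W_{T_1}$ with $T_1$ spherical and $L_{T_0}$ a generalized homology $(n-1)$-sphere. The key point is that such a splitting corresponds to a partition $S=T_0\sqcup T_1$ in which every generator in $T_0$ commutes with every generator in $T_1$; since $(W,S)$ is irreducible, one of $T_0,T_1$ must be empty, and since $W_{T_1}$ is finite while $W$ is infinite, it is $T_1$ that vanishes. Hence $T_0=S$ and the condition becomes simply that $L=L_S$ is a generalized homology $(n-1)$-sphere. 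Reading this in both directions gives $(1)\Leftrightarrow(2)$, with the duality dimension of $W$ exceeding that of the homology sphere by one.

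For $(1)\Leftrightarrow(3)$ I would first reindex: statement (3), namely $\tilde{H}^i(L_{S-T})=0$ for all $i$ and all non-empty spherical $T$, is the same as asserting, for every integer $i$, the vanishing $\tilde{H}^{i-1}(L_{S-T})=0$ for all non-empty spherical $T$, which is exactly condition (3) of Theorem~\ref{Thm:Hos} at degree $i$. By that theorem (applicable since $(W,S)$ is irreducible), this holds for a given $i$ if and only if $H^i(W;\Z W)$ is finitely generated. Thus statement (3) of the Lemma is equivalent to the finite generation of $H^i(W;\Z W)$ for every $i$. If (1) holds, then passing to a torsion-free finite-index Poincar\'{e} duality subgroup $G$ and using $H^*(W;\Z W)\cong H^*(G;\Z G)$ shows each $H^i(W;\Z W)$ is either $0$ or $\Z$, hence finitely generated, giving (3). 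Conversely, if (3) holds, let $n$ be the least integer with $H^n(W;\Z W)\neq 0$; such an $n$ exists because an infinite Coxeter group has non-vanishing cohomology in its top (virtual cohomological) dimension. Since $H^n(W;\Z W)$ is then finitely generated, the Remark following Theorem~\ref{Thm:Farrell} yields that $W$ is a virtual Poincar\'{e} duality group of dimension $n$, which is (1).

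The main obstacle I anticipate is not a single hard computation but the careful bookkeeping that lets the three theorems interlock: verifying that irreducibility together with infiniteness genuinely forces the spherical factor $T_1$ to be empty in Davis's decomposition, checking that the ``for all $i$'' phrasing of (3) matches Theorem~\ref{Thm:Hos} degree by degree under the shift $i\mapsto i-1$, and---most delicately---guaranteeing that some $H^n(W;\Z W)$ is nonzero so that Farrell's criterion has a degree $n$ to latch onto. This last point is where I would be most careful, since it is what converts ``all cohomology finitely generated'' into the genuinely global conclusion that $W$ is a duality group.
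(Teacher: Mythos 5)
Your proposal is correct and follows essentially the same route as the paper's own proof: $(1)\Leftrightarrow(2)$ via Davis's theorem using irreducibility, $(1)\Rightarrow(3)$ via Theorem~\ref{Thm:Hos} and the finite generation of $H^*(W;\Z W)$ for a virtual Poincar\'{e} duality group, and $(3)\Rightarrow(1)$ via Theorem~\ref{Thm:Hos}, the non-vanishing of $H^{i_0}(W;\Z W)$ for infinite $W$, and Farrell's criterion. The only difference is that you spell out some details (the elimination of the spherical factor $T_1$, the degree shift) that the paper leaves implicit.
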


\begin{proof}
$(1)\Leftrightarrow (2)$: 
We obtain the equivalence of (1) and (2) from Theorem~\ref{Thm:Davis}, 
since $(W,S)$ is irreducible.

$(1)\Rightarrow (3)$: 
We obtain this implication from Theorem~\ref{Thm:Hos}, 
because if $W$ is a virtual Poincar\'{e} duality group 
then $H^i(W;\Z W)$ is finitely generated for any $i$.

$(3)\Rightarrow (1)$: 
Suppose that $\tilde{H}^i(L_{S-T})=0$ for any $i$ and 
any non-empty spherical subset $T$ of $S$.
Then by Theorem~\ref{Thm:Hos}, 
$H^{i+1}(W;\Z W)$ is finitely generated for any $i$.
Since $W$ is infinite, $H^{i_0}(W;\Z W)$ is non-trivial for some $i_0$ 
(cf.\ \cite{Br0}, \cite{GO}).
Hence by Theorem~\ref{Thm:Farrell}, 
$W$ is a virtual Poincar\'{e} duality group.
\end{proof}

We obtain Theorem~\ref{Thm1} from Theorem~\ref{Thm2}.
In particular, we obtain the following.

\begin{Theorem}\label{Thm1-2}
Let $\Gamma$ be a finite graph with at least 3 vertices 
and let $(W,S)$ be the right-angled Coxeter system associated by $\Gamma$.
If the Coxeter group $W$ is an irreducible virtual Poincar\'{e} duality group, 
then the graph $\Gamma$ is reconstructible.
\end{Theorem}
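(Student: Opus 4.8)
The plan is to prove the theorem by distinguishing whether the right-angled system $(W,S)$ is reducible or irreducible, since the engine that converts the group-theoretic hypothesis into information about $L$ is Lemma~\ref{Lem1}, and that lemma assumes irreducibility while the theorem assumes none. The key structural remark is that for a right-angled system the Coxeter diagram is exactly the complement graph $\bar{\Gamma}$ (two generators fail to commute precisely when they are \emph{non}-adjacent in $\Gamma$), so $(W,S)$ is reducible if and only if $\bar{\Gamma}$ is disconnected, equivalently $\Gamma$ is a nontrivial join. This dichotomy lets me route the two cases through completely different tools: the irreducible case through Lemma~\ref{Lem1} and Theorem~\ref{Thm2}, the reducible case through classical reconstruction facts.

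First I would treat the reducible case. If $\bar{\Gamma}$ is disconnected then, having at least $3$ vertices, it is reconstructible by the known fact that disconnected graphs are reconstructible (item (iii) of Section~1). I would then invoke the standard invariance of reconstructibility under complementation: the deck of $\bar{\Gamma}$ is obtained card-by-card by complementing the deck of $\Gamma$, since $\bar{\Gamma}$ minus a vertex is the complement of $\Gamma$ minus that vertex; hence two graphs with a common deck have complements with a common deck, and reconstructibility of $\bar{\Gamma}$ forces reconstructibility of $\Gamma$. Note that this case uses none of the Poincar\'{e} duality hypothesis.

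For the irreducible case, $\bar{\Gamma}$ is connected on at least $3$ vertices, hence contains an edge, so $\Gamma$ has a non-adjacent pair and $W$ contains an infinite dihedral subgroup; thus $W$ is infinite and the hypotheses of Lemma~\ref{Lem1} are satisfied. By Lemma~\ref{Lem1}, the assumption that $W$ is a virtual Poincar\'{e} duality group is equivalent to $L=L(W,S)$ being a generalized homology sphere. As $L$ has $|S|\ge 3$ vertices it is not the two-point generalized homology $0$-sphere, so $L$ is a generalized homology $n$-sphere with $n\ge 1$; by definition this is a polyhedral homology $n$-manifold with $n\ge 1$. Therefore $\Gamma=L^{(1)}$ is the $1$-skeleton of a flag complex that is a homology $n$-manifold with $n\ge 1$, and Theorem~\ref{Thm2}(i) yields at once that $\Gamma$ is reconstructible.

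The step I expect to require the most care is the reducible case, because there the Poincar\'{e} duality hypothesis does \emph{not} make $L$ a closed homology manifold: by Theorem~\ref{Thm:Davis} a reducible virtual duality group splits as $W_{T_0}\times W_{T_1}$ with $T_1$ spherical and $L_{T_0}$ a generalized homology sphere, so $L$ is the join of a homology sphere with a simplex — a homology manifold \emph{with boundary}. This is precisely why Theorem~\ref{Thm2} cannot be applied there, and why I divert the reducible case to the complement. The one non-formal ingredient is then the complementation-invariance of reconstructibility, which I would state and justify explicitly via the card-complementation observation above.
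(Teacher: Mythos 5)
Your proof is correct, and in the main (irreducible) case it runs along the route the paper intends: Lemma~\ref{Lem1} converts the virtual Poincar\'e duality hypothesis into the statement that $L=L(W,S)$ is a generalized homology sphere, which, having at least $3$ vertices and hence dimension at least $1$, is a homology $n$-manifold with $n\ge 1$, so Theorem~\ref{Thm2}(i) applies. Where you genuinely add something is the reducible case. The paper merely asserts that Lemma~\ref{Lem1} ``implies'' Theorem~\ref{Thm1-2}, but that lemma carries the hypotheses that $(W,S)$ is irreducible and that $W$ is infinite, neither of which is assumed in the theorem, and the paper never explains how to dispose of the reducible (or finite) case. Your observation that for a right-angled system the Coxeter diagram is the complement graph $\bar\Gamma$, so that reducibility is exactly disconnectedness of $\bar\Gamma$, combined with the standard facts that disconnected graphs on at least three vertices are reconstructible and that reconstructibility passes to complements (since $\overline{\Gamma_{S-\{s\}}}=\bar\Gamma_{S-\{s\}}$ card by card, a common deck for $\Gamma,\Gamma'$ yields a common deck for $\bar\Gamma,\bar\Gamma'$), cleanly closes this gap without using the duality hypothesis at all. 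Your closing remark is also accurate: by Theorem~\ref{Thm:Davis} the reducible case makes $L$ the join of a generalized homology sphere with a simplex, i.e.\ a homology manifold with boundary, so Theorem~\ref{Thm2} is genuinely unavailable there and the detour through $\bar\Gamma$ is needed. In short, you use the same engine as the paper for the irreducible case and supply a necessary, correct repair of an elision in the paper's argument for the reducible one.
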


%

%

\begin{thebibliography}{[99]}
%
\bibitem {Bes0}
M.~Bestvina, 
{\it The virtual cohomological dimension of Coxeter groups}, 
Geometric Group Theory Vol.\ 1, LMS Lecture Notes, 
vol.\ 181, 1993, pp.\ 19--23.
%
\bibitem {Bes}
M.~Bestvina, 
{\it Local homology properties of boundaries of groups},
Michigan Math.\ J.\ {43} (1996), 123--139.
%
\bibitem {BoHe}
J.~A.~Bondy and R.~L.~Hemminger, 
{\it Graph reconstruction---a survey}, 
J.\ Graph Theory 1 (1977), 227--268.
%
\bibitem {Bo}
N.~Bourbaki, 
{\it Groupes et Algebr\`{e}s de Lie}, Chapters IV-VI, 
Masson, Paris, 1981.
%
\bibitem {BH}
M.~R.~Bridson and A.~Haefliger, 
{\it Metric spaces of non-positive curvature}, 
Springer-Verlag, Berlin, 1999.
%
\bibitem {Br}
K.~S.~Brown, 
{\it Buildings}, Springer-Verlag, 1980.
%
\bibitem {Br0}
K.~S.~Brown, 
{\it Cohomology of groups}, Springer-Verlag, 1982.
%
\bibitem {D1}
M.~W.~Davis, 
{\it Groups generated by reflections and aspherical 
manifolds not covered by Euclidean space}, 
Ann.\ of Math.\ 117 (1983), 293--324.
%
\bibitem {D2}
M.~W.~Davis, 
{\it Nonpositive curvature and reflection groups},
in Handbook of geometric topology 
(Edited by R.~J.~Daverman and R.~B.~Sher), pp.\ 373--422, 
North-Holland, Amsterdam, 2002.
%
\bibitem {Dav0}
M.~W.~Davis, 
{\it The cohomology of a Coxeter group with group ring coefficients},
Duke Math.\ J.\ 91 (1998), 297--314.
%
\bibitem {Dav1}
M.~W.~Davis, 
{\it Poincar\'{e} duality groups}, 
Surveys on Surgery Theory, Vol.\ 1 
(dedicated to C.~T.~C.~Wall, edited by S.~Cappell, A.~Ranicki and J.~Rosenberg),
pp.\ 167--193, Princeton Univ.\ Press, 1999.
%
\bibitem {Dr}
A.~N.~Dranishnikov, 
{\it On the virtual cohomological dimensions of Coxeter groups}, 
Proc.\ Amer.\ Math.\ Soc.\ 125 (1997), 1885--1891.
%
\bibitem {F}
F.~T.~Farrell, 
{\it Poincar\'{e} duality and groups of type (FP)},
Comment.\ Math.\ Helv.\ 50 (1975), 187--195.
%
\bibitem {GO}
R.~Geoghegan and P.~Ontaneda,
{\it Boundaries of cocompact proper CAT(0) spaces}, 
Topology 46 (2007), 129--137.
%
\bibitem {Gr}
M.~Gromov, 
{\it Hyperbolic groups}, 
Essays in group theory (Edited by S.~M.~Gersten), 
pp.\ 75--263, 
M.S.R.I. Publ. 8, 1987.
%
\bibitem {Gr0}
M.~Gromov, 
{\it Asymptotic invariants for infinite groups}, 
Geometric Group Theory (G.A. Niblo and M.A. Roller, eds.), 
LMS Lecture Notes, vol. 182, 
Cambridge University Press, Cambridge, 1993, pp.\ 1--295.
%
\bibitem {Ho0}
T.~Hosaka, 
{\it On the cohomology of Coxeter groups}, 
J.\ Pure Appl.\ Algebra 162 (2001), 291--301.
%
\bibitem {Ho00}
T.~Hosaka, 
{\it Determination up to isomorphism of right-angled Coxeter systems}, 
Proc.\ Japan Acad.\ Ser.\ A Math.\ Sci.\ 79 (2003), 33--35.
%
\bibitem {Hu}
J.~E.~Humphreys, 
{\it Reflection groups and Coxeter groups}, 
Cambridge University Press, 1990.
%
\bibitem {Ke}
P.~J.~Kelly, 
{\it A congruence theorem for trees}, 
Pacific J.\ Math.\ 7 (1957), 961--968.
%
\bibitem {Ma}
B.~Manvel, 
{\it On reconstruction of graphs}, 
Ph.D.\ thesis, University of Michigan, 1988.
%
\bibitem {Mc}
B.~D.~McKay, 
{\it Computer reconstruction of small graphs}, 
J.\ Graph Theory 1 (1977), 281--283.
%
\bibitem {Mc2}
B.~D.~McKay, 
{\it Small graphs are reconstructible}, 
Australas.\ J.\ Combin.\ 15 (1997), 123--126.
%
\bibitem {Mo}
G.~Moussong, 
{\it Hyperbolic Coxeter groups}, 
Ph.D.\ thesis, Ohio State University, 1988.
%
\bibitem {Mu}
J.~R.~Munkres, 
{\it Elements of algebraic topology}, 
The advanced book program, Perseus Books, 1984.
%
\bibitem {Ni}
A.~Nijenhuis, 
{\it Note on the unique determination of graphs by proper subgraphs}, 
Notices Amer.\ Math.\ Soc.\ 24 (1977), A-290.
%
\bibitem {Qu}
F.~Quinn, 
{\it Problems on homology manifolds}, 
Geometry \& Topology Monographs 9 (2006), 87--103.
%
\bibitem {Ra}
D.~Radcliffe, 
{\it Unique presentation of Coxeter groups and related groups}, 
Ph.D.\ thesis, University of Wisconsin-Milwaukee, 2001.
%
\end{thebibliography}
\end{document}